\newtheorem{dummy}{anything}[section]
\newtheorem{theorem}[dummy]{Theorem}
\newtheorem{lemma}[dummy]{Lemma}
\newtheorem{corollary}[dummy]{Corollary}
\theoremstyle{definition}
\newtheorem{definition}[dummy]{Definition}
\newtheorem{remark}[dummy]{Remark}
\newcommand{\cH}{\mathcal H}
\newcommand{\cS}{\mathcal S}
\newcommand{\cE}{\mathcal E}
\newcommand{\cT}{\mathcal T}
\newcommand{\bZ}{\mathbb Z}
\newcommand{\cy}[1]{\bZ/{#1}}
\newcommand{\bd}{\partial}
\newcommand{\La}{\Lambda}
\newcommand{\Ga}{\Gamma}
\newcommand{\mmatrix}[4]{\left (\vcenter
{\xymatrix@C-2pc@R-2pc{#1&#2\\#3&#4} } \right )}
\DeclareMathOperator{\Hom}{Hom} 
 \DeclareMathOperator{\rank}{rank}
\DeclareMathOperator{\Isom}{Isom}
\DeclareMathOperator{\hepta}{Aut}
\DeclareMathOperator{\id}{id}\DeclareMathOperator{\coker}{coker}
\DeclareMathOperator{\Her}{Her} \DeclareMathOperator{\cd}{cd}
  \DeclareMathOperator{\ev}{ev}
\DeclareMathOperator{\pr}{pr}   
\DeclareMathOperator{\rel}{rel}  \DeclareMathOperator{\gd}{gd}
\newcommand{\hept}[1]{\hepta_{\bullet}(#1)}
\newcommand{\heqpt}[1]{\cE_{\bullet} (#1)}
\newcommand{\hM}{\cH (M)}
\newcommand{\quadtypeM}{[\pi, \pi_2, s_M]}
\newcommand{\quadtypecM}{[\pi, \pi_2, c_*[M]]}
\newcommand{\quadtypecMw}{[\pi, \pi_2, c_*[M], w_2]}
\newcommand{\Ospin}{\Omega^{Spin}}
\newcommand{\Bw}{B\langle w_2\rangle}
\newcommand{\Mw}{M\negthinspace\langle w_2\rangle}
\newcommand{\whept}[1]{\hepta_{\bullet}(#1,w_2)}
\newcommand{\whtildeM}{\widetilde \cH (M,w_2)}
\begin{document}
\title[s-cobordism classification of $4$-manifolds]
{s-cobordism classification of $4$-manifolds through the  group of homotopy self-equivalences}
\author[F. Hegenbarth, M. Pamuk, and D. Repov\v{s}]{Friedrich Hegenbarth, Mehmetc\.{i}k Pamuk, and Du\v{s}an Repov\v{s}}
\subjclass[2010]{Primary: 57N13; Secondary: 55P10, 57R80}
\keywords{$s$-cobordism, $4$-manifold, cohomological dimension, homotopy self-equivalence}
\address{Department of Mathematics
 \newline\indent
University of Milano
 \newline\indent
Milano, Italy} \email{friedrich.hegenbarth{@}unimi.it}

\address{Department of Mathematics
 \newline\indent
Middle East Technical University
 \newline\indent
Ankara 06531, Turkey} \email{mpamuk{@}metu.edu.tr}

\address{Department of Mathematics and Physics
 \newline\indent
University of Ljubljana
 \newline\indent
Ljubljana 1000, Slovenia} \email{dusan.repovs{@}guest.arnes.si}

\begin{abstract}\noindent
The aim of this paper is to give an $s$-cobordism classification of topological $4$-manifolds 
in terms of the standard invariants using the group of homotopy self-equivalences.  Hambleton 
and Kreck constructed a braid to study the group 
of homotopy self-equivalences of $4$-manifolds.  Using this braid together with the modified 
surgery theory of Kreck, we give an $s$-cobordism classification for certain $4$-manifolds 
with fundamental group $\pi$, such that $\cd \pi \leq 2$.
\end{abstract}

\maketitle
\section{Introduction}

The cohomological dimension of a group $G$, denoted $\cd G$, is the projective dimension of $\bZ$ over $\bZ G$. 
In other words, it is the smallest non-negative integer $n$ such that $\bZ$ admits a projective resolution
$P = (P_i)_{i\leq 0}$ of $\bZ$ over $\bZ G$ of length $n$, satisfying $P_i = 0$
for $i > n$.  If there is no such $n$ exists, then we set $\cd G = \infty$.

In this paper we are going to deal with groups whose cohomological dimension  
is less than or equal to $2$.  This class of groups contains the free groups, knot 
groups and one-relator groups whose relator is not a proper power.
Our aim here is to give an $s$-cobordism classification of topological $4$-manifolds with
fundamental group $\pi$ such that  $\cd \pi \leq 2$, in terms of the standard invariants such 
as the fundamental group, characteristic classes and the equivariant intersection form using the 
group of homotopy self-equivalences.

Let $M$ be a closed, connected, oriented, $4$-manifold with a fixed base point $x_0\in M$.  
Throughout the paper, the fundamental  group $\pi_1(M, x_0)$ will be denoted by $\pi$, 
the higher homotopy groups $\pi_i(M, x_0)$ will be denoted by $\pi_i$. Let $ \Lambda = {\bZ}
\left[\pi \right]$ denote the integral group ring of $ \pi$.
The standard involution $\lambda \rightarrow \overline{\lambda}$ on $\Lambda$ is induced 
by the formula
                    $$ 
                         \sum n_g g \rightarrow \sum n_g g^{-1}
                    $$
for $n_g \in {\bZ}$ and $g \in \pi$. All modules considered in this
paper will be right $\La$-modules. 

The first step in the classification of manifolds is the determination
of their homotopy type.  It is a well known result of Milnor \cite{milnor}
and Whitehead \cite{whitehead-49} that a simply connected $4$-dimensional manifold
$M$ is classified up to homotopy equivalence by its integral intersection
form.  In the non-simply connected case,  one has to work with the 
equivariant intersection form $s_M$ where  
                    $$
                         s_M \colon H_2(M; \La) \times H_2(M; \La) \to \La ; \ (a, b) \to s_M(a, b) = a^*(b) \ .
                    $$ 
This is a Hermitian pairing where $a^*\in H^2(M; \La)$ is the Poincar\'e dual of $a$, 
such that $s_M(a, b) = \overline{s_M(b, a)} \in \La$.  This form does not detect
the homotopy type and the missing invariant is the first $k$-invariant $k_M\in H^3(\pi; \pi_2)$, 
see \cite[Remark~4.5]{hk1} for an example.

Hambleton and Kreck \cite{hk1} defined the quadratic $2$-type as the
quadruple $[\pi, \pi_2, k_M, s_M]$ and the group of isometries of the quadratic $2$-type of $M$,
$\Isom [\pi, \pi_2, k_M, s_M]$, consists of all pairs of isomorphisms
                    $$
                        \chi \colon \pi\to \pi \quad \textrm{and} \quad \psi \colon \pi_2\to \pi_2 \ ,
                    $$  
such that $\psi(gx)=\chi(g)\psi(x)$ for all $g\in \pi$ and $x\in \pi_2$, which preserve the $k$-invariant,
$\psi_*(\chi^{-1})^*k_M = k_M$, and the equivariant intersection form,
$s_M(\psi(x), \psi(y))=\chi_*s_M(x, y)$.  It was shown in \cite{hk1} that the quadratic $2$-type detects the homotopy 
type of an oriented $4$-manifold $M$ if $\pi$ is a finite group with $4$-periodic cohomology.

Throughout this paper $H^3(\pi; \pi_2)=0$,  so we have $k_M=0$. For notational ease we will drop it from the notation 
and write $\Isom [\pi, \pi_2, s_M]$ for the group of isometries of the quadratic
$2$-type.

Let $\hept M$ denote the group of homotopy classes of homotopy self-equivalences of $M$, 
preserving both the given orientation on $M$ and the base-point $x_0\in M$.  
To study $\hept M$, Hambleton and Kreck \cite{hk2} established a commutative braid of 
exact sequences, valid for any closed, oriented smooth or topological $4$-manifold.  
To give an $s$-cobordism classification we use the above mentioned braid together with 
the modified surgery theory of Kreck \cite{kreck}.

In section $2$, we briefly review some background material about the modified 
surgery theory and some of the terms of the braid.  Throughout this paper we always 
refer to \cite{hk2}  for the details of the definitions concerning the braid.  In section $3$, 
we are going to further assume that the the following three conditions are satisfied:
                    \begin{itemize}
                          \item[(A1)] The assembly map $A_4 \colon H_4(K(\pi, 1); \mathbb{L}_0(\bZ)) 
                          \to L_4(\bZ[\pi])$ is injective, where $\mathbb{L}_0(\bZ)$ stands for the 
                          connective cover of the periodic surgery spectrum;
                         \item[(A2)] Whitehead group $Wh(\pi)$ is trivial for $\pi$; and
                         \item[(A3)] The surgery obstruction map $\cT(M\times I, \bd)\to L_5(\bZ[\pi])$   is onto, 
                                           where  $M$ is a closed, connected, oriented $4$-manifold with $\pi_1(M)\cong \pi$.                       
                    \end{itemize}
Note that if the Farrell-Jones conjecture \cite{fj} is true for torsion-free groups,
then $\pi$ satisfies all the conditions above.

Now let $u_M\colon M\to K(\pi, 1)$ be a classifying map for the
fundamental group $\pi$. Consider the homotopy fibration
                  $$ 
                        \xymatrix{\widetilde{M}\ar[r]^p&M\ar[r]^(0.4){u_M}&K(\pi,1)}
                  $$ 
 which induces a short exact sequence
                  $$
                        \xymatrix{0\ar[r]&H^2(K(\pi, 1); \cy2)\ar[r]^(0.55){u_M^*}& 
                         H^2(M; \cy2)\ar[r]^{p^*}&H^2(\widetilde{M}; \cy 2)}.
                  $$
Next we recall the following definition given in \cite{hk1.5}.
                  \begin{definition}
                     We say that a manifold $M$ has $w_2$-type (I), (II), or (III) if one
                     of the following holds:
                         \begin{itemize}
                             \item[(I)]$w_2(\widetilde{M})\neq 0$;
                             \item[(II)]$w_2(M)=0$; or
                             \item[(III)]$w_2(M)\neq 0$ and $w_2(\widetilde{M})=0$.
                        \end{itemize}
                  \end{definition}

Using the braid constructed in \cite {hk2} together with the modified surgery theory of
Kreck \cite{kreck}, we show that for topological $4$-manifolds which have
$w_2$-type (I) or (II), with $\cd \pi \leq 2$ and  satisfying (A1), (A2) and (A3),
Kirby-Siebenmann ($ks$) invariant and the quadratic $2$-type give the  $s$-cobordism 
classification.  Our main result is the following:

\begin{theorem}\label{main}
Let $M_1$ and $M_2$ be closed, connected, oriented, topological $4$-manifolds with 
fundamental group $\pi$ such that $\cd \pi \leq 2$ and satisfying properties $(A1)$, $(A2)$ and $(A3)$.
Suppose also that they have the same Kirby-Siebenmann invariant and 
$w_2$-type (I) or (II). Then $M_1$ and $M_2$ are $s$-cobordant if and only if they 
have isometric quadratic $2$-types.
\end{theorem}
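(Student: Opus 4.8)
The plan is to run the modified surgery machine of Kreck, using the hypothesis $\cd\pi\le 2$ and the $w_2$-type restriction to control the relevant normal $1$-type and bordism group. First I would observe that one direction is essentially formal: if $M_1$ and $M_2$ are $s$-cobordant, then an $s$-cobordism $W$ is a homotopy equivalence over $K(\pi,1)$ between $M_1$ and $M_2$, and hence induces an isomorphism of fundamental groups, of $\pi_2$ as a $\Lambda$-module, and of equivariant intersection forms (Poincar\'e duality of $W$ rel boundary), so the quadratic $2$-types are isometric; the Kirby-Siebenmann invariants agree because $ks$ is an $s$-cobordism invariant. So the substance is the converse. Given an isometry $(\chi,\psi)$ of quadratic $2$-types, I would first upgrade it to a homotopy equivalence $f\colon M_1\to M_2$. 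For $w_2$-type (I) or (II) the first $k$-invariant vanishes (we have assumed $H^3(\pi;\pi_2)=0$ throughout), so $M_i$ is, up to the $2$-skeleton, a product $K(\pi,1)\times$(wedge of $2$-spheres), and the argument of Hambleton--Kreck realizing an isometry by a degree-one map into the Postnikov $2$-stage goes through; the obstruction to lifting this to an actual homotopy equivalence $M_1\to M_2$ lives in $H^4$ with coefficients in $\pi_3$ of the relevant space, and I would use $\cd\pi\le 2$ together with the $w_2$-type hypothesis to see that the only obstruction is the difference of the images of the fundamental classes, which is killed by the freedom of connected-summing with $S^2\times S^2$ or the twisted $S^2\widetilde\times S^2$ — exactly the place where $w_2$-type (I) vs. (II) enters. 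This step is where I would lean most heavily on the braid of \cite{hk2}: the group $\hept{M}$ and its interaction with $\heqpt{M}$ tells us precisely which self-equivalences of the quadratic $2$-type of $M_2$ lift to honest self-homotopy-equivalences, and hence it lets us arrange that $f$ is not merely a quadratic-$2$-type isometry but a genuine homotopy equivalence.

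Once $f\colon M_1\to M_2$ is a homotopy equivalence, I would pass to Kreck's modified surgery. The normal $1$-type $B$ of $M_i$ is determined by $\pi$ and the $w_2$-type: in case (I) it is (a twisted form over) $K(\pi,1)\times BSpin$ or rather the fiber product $B=\Bw$ type object, and in case (II) it is $K(\pi,1)\times BSpin$. The homotopy equivalence $f$, together with a choice of normal $B$-structure, gives an element of the bordism group $\Om_5(B)$, or more precisely, $M_1$ and $M_2$ with compatible normal $1$-smoothings bound a $B$-bordism $W$, and we must show $W$ can be improved to an $s$-cobordism. By Kreck's theorem, this is possible (after stabilization by connected-summing copies of $S^2\times S^2$ in the interior, which because $f$ is already a homotopy equivalence does not change the end manifolds up to the moves we have already allowed) provided the obstruction in the monoid $l_{6}(\bZ[\pi])$ — equivalently, after we have made $W$ $2$-connected rel boundary, an element of $L_6(\bZ[\pi])$ or its reduction — vanishes. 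Here conditions (A1), (A2), (A3) do the work: (A2) kills Whitehead torsion so that an $h$-cobordism is an $s$-cobordism; (A1) and (A3) together with $\cd\pi\le2$ force the relevant surgery obstruction group to be detected by the assembly map and hence to vanish on the class coming from a homotopy equivalence, because the $L_5$-action (A3) lets us modify $W$ by a self-equivalence to hit the zero obstruction, and (A1) guarantees no phantom obstruction survives.

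Concretely the order of steps is: (i) the easy direction via Poincar\'e duality of the $s$-cobordism; (ii) realize the quadratic-$2$-type isometry by a map $M_1\to M_2$ of degree one inducing it, using vanishing of $k_M$; (iii) use the $w_2$-type hypothesis and the Hambleton--Kreck braid to correct this to a homotopy equivalence $f$, absorbing the fundamental-class obstruction by a stabilization that the braid shows is invisible to the quadratic $2$-type; (iv) choose normal $1$-smoothings so $f$ is a $B$-bordism class in $\Om_5(B)$, build the bordism $W$; (v) surger $W$ to be highly connected and identify the remaining obstruction in $L_6(\bZ[\pi])$ (or its image under the relevant reduction); (vi) use (A1)--(A3) and $\cd\pi\le 2$ to kill this obstruction, obtaining an $h$-cobordism, which by (A2) is an $s$-cobordism. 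The main obstacle I expect is step (iii) combined with step (vi): one has to check carefully that the stabilizations needed to realize the homotopy equivalence are compatible with the stabilizations allowed in Kreck's surgery and that neither introduces a $ks$ discrepancy — this is exactly where the assumption that $M_1$ and $M_2$ already share the same Kirby--Siebenmann invariant is used, and where the $w_2$-type dichotomy (I)/(II) versus the excluded type (III) becomes essential, since in type (III) the normal $1$-type is more complicated and the bordism group acquires extra summands that are not controlled by the quadratic $2$-type alone.
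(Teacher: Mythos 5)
There is a genuine gap, in two places. First, your step (ii)--(iii) (upgrading the isometry of quadratic $2$-types to a homotopy equivalence $M_1\to M_2$) is not an argument: the obstruction you identify is the difference of the images of the fundamental classes in $H_4(B)$, and ``connected-summing with $S^2\times S^2$'' does not dispose of it, since that changes the manifolds rather than the map. The real issue, which the paper flags explicitly, is that a self-equivalence of the $2$-type preserving the intersection form $s_M$ need \emph{not} preserve $c_*[M]$; bridging exactly this discrepancy is where $\cd \pi\leq 2$ enters, via Hillman's structure results ($\pi_2\cong P\oplus H^2(\pi;\La)$, strongly minimal models, and the resulting splitting of $H_4(B)$), after which the homotopy equivalence is obtained from the Baues--Bleile fundamental-triple criterion \cite{baues-bleile}, not from a Postnikov-stage obstruction count. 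Your proposal never engages with this $c_*[M]$ versus $s_M$ issue, which is the technical heart of the proof.

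Second, the surgery step is both misindexed and mechanism-free. For $4$-manifolds the Kreck obstruction of a $5$-dimensional bordism lies in $l_5(\bZ[\pi])$ (compare $L_5$, not $l_6/L_6$), and the hard point is to show it is \emph{elementary}; asserting that (A1) and (A3) ``force the obstruction to vanish'' does not do this. In the paper, (A1) is used much earlier, via \cite{j.f.davis}, to conclude that the homotopy equivalent manifolds $M_1, M_2$ (with equal $ks$ and signature, whence the $w_2$-type (I)/(II) computation of the normal $1$-type bordism group) are bordant over the normal $1$-type. The obstruction is then never confronted in Kreck's monoid at all: one arranges a handle decomposition of $W$ with only $2$- and $3$-handles, identifies the middle level $M_{3/2}\approx M_1\sharp m(S^2\times S^2)\approx M_2\sharp m(S^2\times S^2)$, constructs (by the modifications described above) a self-equivalence $\phi$ of the $2$-type of the stabilized manifold with $\phi_*(c_*[M])=c_*[M]$, lifts it to $\Isom^{\langle w_2\rangle}\quadtypecM$ and then to $\whtildeM$ via the braid of \cite{hk2}, and uses (A3) (surjectivity of $\cT(M\times I,\bd)\to L_5(\bZ[\pi])$) together with the Wall--Kreck comparison diagram to subtract off the $L_5$-image, so that by exactness the resulting element comes from an $h$-cobordism $T$ of $M_{3/2}$; gluing $T$ between the two halves of $W$ cancels the $2$-handles against the $3$-handles, and (A2) converts the resulting $h$-cobordism into an $s$-cobordism. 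Without this construction (or some substitute showing the modified-surgery obstruction is elementary), your outline does not constitute a proof.
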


Let us finish this introductory section by pointing out the  differences of methods 
used in this paper and the paper by Hambleton, Kreck and Teichner \cite{hkt} which classifies 
closed orientable $4$-manifolds with fundamental groups of geometric dimension $2$ subject to 
the same hypotheses of this paper. 

The geometric dimension of a group $G$, denoted by  $\gd G$, is the minimal dimension of 
a CW model for the classfying space $BG$.  Eilenberg and Ganea\cite{eilenberg-ganea} showed 
that  for any group $G$ we have, $\gd G = \cd G $ for $\cd G > 2$ and if $\cd G=2$ then $\gd G \leq 3$.  
Later Stallings\cite{stallings} and Swan\cite{swan} showed that  $\cd G=1$ if and only if  $\gd G =1$.  
It follows that $\gd G = \cd G$, except  possibly that there may exist 
a group $G$ for which  $\cd G=2$ and  $\gd G = 3$. The statement that  $\cd G$ and 
$\gd G$ are always equal has become known as the Eilenberg-Ganea conjecture 
(see \cite{m.w.davis} for more details and potential counterexamples to Eilenberg-Ganea conjecture).

Although the Eilenberg-Ganea conjecture is still open,   Bestvina and Brady\cite{bestvina-brady} showed  that 
at least one of the Eilenberg-Ganea and Whitehead conjectures has a negative answer, i.e., 
either there exists a group of cohomological dimension  and geometric dimension a counterexample to the 
Eilenberg-Ganea Conjecture or there exists a nonaspherical subcomplex of an aspherical complex a 
counterexample to the Whitehead Conjecture \cite{whitehead-41}.

Therefore, our main result might be a slight generalization of Theorem C of \cite {hkt}.   
Also our line of argument is different:  we first work with the bordism group over the normal $1$-type and then to 
use the braid constructed in \cite{hk2}, 
we work with the normal $2$-type and the $w_2$-type, whereas in \cite{hkt}, the authors work with 
the \textit{reduced} normal $2$-type and a \textit{refinement} of the $w_2$-type.

\section{Background}

The classical surgery theory, developed by Browder, Novikov, Sullivan and Wall in
the $1960$s, is a technique for classifying of high-dimensional
manifolds.  The theory starts with a
normal cobordism $(F, f_1, f_2)\colon (W,N_1,N_2) \to X$ where $f_1$ and $f_2$ are homotopy equivalences
and then asks whether this cobordism is cobordant $\rel \partial$ to an $s$-cobordism.  There is an
obstruction in a group $L_{n+1}(\bZ[\pi_1(X)])$ which vanishes if and only if this is possible.
Later in the $1980$s Matthias Kreck \cite {kreck} generalized this approach:
\begin{definition}\label{type}(\cite{kreck})
Let  $\xi \colon E\to BSO$ be a fibration.
                   \begin{itemize}
                         \item[(i)] A normal $(E, \xi)$ structure $\bar{\nu} \colon N \to E $ of
                          an oriented manifold $N$ in $E$ is a normal $k$-smoothing, if it is
                          a $(k+1)$-equivalence.
                          \item[(ii)] We say that $E$ is $k$-universal if the fibre of the
                          map $E\to BSO$ is connected and its homotopy groups vanish in
                          dimension $\geq k+1$.
                  \end{itemize}
For each oriented manifold $N$, up to fibre homotopy equivalence,
there is a unique $k$-universal fibration $E$ over $BSO$ admitting a
normal $k$-smoothing of $N$.  Thus the fibre homotopy type of the
fibration $E$ over $BSO$ is an invariant of the manifold $N$ and we
call it the \textit{normal $k$-type} of $N$.
\end{definition}
Instead of homotopy equivalences, Kreck started with cobordisms of normal smoothings
$(F, f_1, f_2)\colon  (W,N_1,N_2)\to X $  where
$f_1$ and $f_2$ are only $[\frac{n+1}{2}]$-equivalences. There is an obstruction in a monoid
$l_{n+1}(\bZ[\pi_1(X)])$ which is elementary if and only if that cobordism is cobordant
$\rel\partial$ to an s-cobordism.

Let $M$ be a closed oriented $4$-manifold.  We work with the normal $2$-type of $4$-manifolds.  
That is we need to construct a fibration   $E \to BSO$ whose finer has vanishing homotopy in 
dimensions $\geq k$ and there exists a $3$-equivalence $M \to E$.
Let $B$ denote the $2$-type of $M$ (second stage of
the Postnikov tower for $M$), i.e., there is a commutative diagram
               $$
                  \xymatrix{M \ar[r]^{c} \ar[d]_{u_M}& B \ar[d]^{u_B} \cr
                  B\pi\ar@{=}[r]& B\pi}
              $$ 
Here $u_M$ is unique up to homotopy and a classifying map for the 
universal covering $\widetilde M$ of $M$.
We can attach cells of dimension $\geq 4$ to obtain a CW-complex
structure for $B$ with the following properties:
              \begin{itemize}
                 \item[(i)] The inclusion map $c: M\to B$ induces isomorphisms
                 $\pi_k(M)\to\pi_k(B)$ for $k \leq 2$, and
                 \item[(ii)] $\pi_k(B) = 0$ for $k \geq 3$.
              \end{itemize}
Note that the universal covering space $\widetilde{B}$  of $B$ is
the Eilenberg-MacLane space $K(\pi_2, 2)$, and the inclusion
$\widetilde{M} \to \widetilde{B}$ induces isomorphism on $\pi_2$.

The class $w_2:= w_2(M)\in H^2(M; \bZ)\cong H^2(B; \bZ)$ gives a fibration and we can form the pullback
          $$
              \xymatrix@!C
              {BSpin\ar[r] & \Bw\ar[r]^j\ar[d]_{\xi} & B\ar[d]^{w_2}\\
                BSpin\ar[r] & BSO\ar[r]^(0.45){w}&K(\cy 2,2)}
          $$ 
where $w$ pulls back the second Stiefel-Whitney class for the universal oriented 
vector bundle over $BSO$.  Note that the fibration $\Bw$ over BSO is the normal $2$-type of $M$ 
and if $w_2 = 0$, then $\Bw=B \times BSpin$.

We have a similar pullback diagram for $M$.  Hambleton and Kreck \cite{hk2},
defined a thickening $\whept{M}$ of $\hept{M}$ and then they established a commutative braid
of exact sequences, valid for any closed, oriented smooth or
topological $4$-manifold.

\begin{definition}\label{thickaut}(\cite{hk2})
Let $\whept{M}$ denote the set of equivalence classes of maps
$\widehat{f}\colon M \to \Mw$ such that (i) $f:=j\circ \widehat{f} $
is a base-point and orientation preserving homotopy equivalence, and
(ii) $\xi\circ \widehat{f} =\nu_M$.
\end{definition}

Given two maps $\widehat{f}, \widehat{g}\colon M\to \Mw$ as above,
we define
         $$
            \widehat{f} \bullet \widehat{g}\colon M\to \Mw
         $$
as the unique map from $M$ into the pull-back $\Mw$ defined  by the
pair $f\circ g\colon M \to M$ and $\nu_M\colon M \to BSO$.  It was
proved in \cite{hk2} that $\whept{M}$ is a group under this
operation and there is a short exact sequence of groups
         $$
           \xymatrix{0 \ar[r]&H^1(M; \cy 2) \ar[r]& 
           \whept{M}\ar[r]& \hept{M}\ar[r]& 1 \ .}
         $$

To define an analogous group $\whept{B}$ of self-equivalences, 
we must first state the following lemma.

\begin{lemma}\label{techB}(\cite{hk2})
Given a base-point preserving map $f\colon M\to B$, there is a
unique extension (up to base-point preserving homotopy)
$\phi_f\colon B\to B$ such that $\phi_f\circ c = f$. If $f$ is a
$3$-equivalence then $\phi_f$ is a homotopy equivalence.  Moreover,
if $w_2\circ f = w_2$, then $w_2\circ \phi_f = w_2$.
\end{lemma}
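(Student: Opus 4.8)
The plan is to prove all three assertions by elementary obstruction theory, using only the two defining features of the $2$-type $B$ recorded above: the CW pair $(B,M)$ has relative cells only in dimensions $\ge 4$, and $\pi_k(B)=0$ for all $k\ge 3$. For mapping \emph{into} $B$ these make $B$ behave like a space whose cells above dimension $2$ are invisible, which is precisely what lets one push maps out of $M$ and across $B$, and makes such extensions essentially unique.

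For existence I would build $\phi_f$ by extending $f$ cell by cell over the relative skeleta of $(B,M)$, keeping the base point $x_0$ fixed throughout. Every relative cell has dimension $n\ge 4$, so its attaching map $S^{n-1}\to B^{(n-1)}$, composed with the extension already constructed, represents an element of $\pi_{n-1}(B)$, and $\pi_{n-1}(B)=0$ since $n-1\ge 3$; hence the extension always continues, and in the limit we obtain a base-point preserving $\phi_f\colon B\to B$ with $\phi_f\circ c=f$. For uniqueness, given two such extensions agreeing on $M$, the successive obstructions to a base-point preserving homotopy between them rel $M$ lie in $H^n(B,M;\pi_n B)$ (local coefficients over $\pi$); these groups vanish for $n\le 3$ because $(B,M)$ has no relative cells in those dimensions, and for $n\ge 4$ because $\pi_n(B)=0$, so the homotopy exists and $\phi_f$ is unique up to base-point preserving homotopy.

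For the statement that $\phi_f$ is a homotopy equivalence when $f$ is a $3$-equivalence I would invoke Whitehead's theorem. Note first that $c$ is itself a $3$-equivalence, being an isomorphism on $\pi_k$ for $k\le 2$ with $\pi_k(B)=0$ for $k\ge 3$. Then from the factorization $f_*=(\phi_f)_*\circ c_*$ on homotopy groups one reads off that $(\phi_f)_*$ is an isomorphism on $\pi_k$ for $k\le 2$, while for $k\ge 3$ it is the map $0\to 0$; since $B$ is a CW complex, $\phi_f$ is then a homotopy equivalence. For the $w_2$ claim I would use that $c$ being a $3$-equivalence forces $c^*\colon H^2(B;\cy 2)\to H^2(M;\cy 2)$ to be an isomorphism, so there is a unique class $w_2\in H^2(B;\cy 2)$ restricting to $w_2(M)$ — this is the identification already implicit in writing $w_2:=w_2(M)\in H^2(M;\bZ)\cong H^2(B;\bZ)$. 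The hypothesis $w_2\circ f=w_2$ then gives $c^*(\phi_f^*w_2)=f^*w_2=w_2(M)=c^*w_2$, and injectivity of $c^*$ yields $\phi_f^*w_2=w_2$, i.e. $w_2\circ\phi_f=w_2$.

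I do not expect a genuine obstacle: the argument is the standard extension-and-uniqueness package for maps into a space with vanishing high-dimensional homotopy. The only points needing a little care are the bookkeeping in the last step — keeping straight the identification of $w_2$ on $M$ with its counterpart on $B$ via $c^*$ and using \emph{injectivity} (not just surjectivity) of $c^*$ on $H^2$ — and, throughout, making sure that all maps and homotopies are taken rel the base point, so that the conclusion genuinely concerns base-point preserving homotopy classes as stated.
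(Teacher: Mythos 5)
Your proposal is correct, and it follows essentially the same route as the source: the paper states this lemma as a quotation from \cite{hk2} without reproducing the proof, and the argument there is exactly your obstruction-theoretic package --- extend $f$ cell by cell over the relative cells of $(B,M)$, which lie in dimensions $\ge 4$, using $\pi_k(B)=0$ for $k\ge 3$, get uniqueness from the vanishing of $H^n(B,M;\pi_n(B))$, conclude the homotopy-equivalence statement from Whitehead's theorem, and deduce the $w_2$ statement from the fact that $c^*$ is an isomorphism on $H^2(-;\cy 2)$. Your care about injectivity of $c^*$ and about keeping everything base-point preserving is exactly the right bookkeeping.
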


\begin{definition}\label{thickautB}(\cite{hk2})
Let $\whept{B}$ denote the set of equivalence classes of maps
$\widehat{f}\colon M \to \Bw$ such that (i) $f:=j\circ \widehat{f} $
is a base-point preserving $3$-equivalence, and (ii) $\xi\circ
\widehat f =\nu_M$.
\end{definition}

\begin{theorem}[\cite{hk2}]
Let $M$ be a closed, oriented $4$-manifold.  Then there
is a sign-commutative diagram of exact sequences 
       \vskip .4cm
                  $$
                      \begin{matrix}
                           \xymatrix@C-30pt{\Omega_5(\Mw)\ar[dr] \ar@/^2pc/[rr]&& \whtildeM
                           \ar[dr] \ar@/^2pc/[rr] && \whept{B}\ar[dr]^{\beta}  \\
                           & \Omega_5(\Bw) \ar[dr] \ar[ur]  &&
                           \whept{M}  \ar[dr]^{\alpha}\ar[ur] &&\Omega_4(\Bw) \\
                           \pi_1(\heqpt{B,w_2}) \ar[ur] \ar@/_2pc/[rr]&&
                           \widehat\Omega_5(\Bw,\Mw) \ar[ur]^\gamma \ar@/_2pc/[rr]&&
                           \widehat\Omega_4(\Mw)\ar[ur] }
                      \end{matrix}
                  $$
       \vskip .9cm \noindent 
such that the two composites ending in
$\whept{M}$ agree up to inversion, and the other sub-diagrams are
strictly commutative.
\end{theorem}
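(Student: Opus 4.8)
The plan is to recover the braid of \cite{hk2} as the modified-surgery analogue of the classical surgery braid. The idea is that once the thickened self-equivalence groups $\whept M$, $\whept B$ of Definitions~\ref{thickaut} and \ref{thickautB} and the structure set $\whtildeM$ have been fitted into exact sequences with suitable bordism groups attached to the comparison map $\Mw\to\Bw$, the whole braid is obtained by overlaying three interlocking long exact sequences of bordism type and checking that the overlaps are (sign-)commutative.

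First I would set up the relative bordism groups. The inclusion $c\colon M\to B$ covers $\id_{B\pi}$ and is compatible with the pulled-back $BSpin$-fibrations, so it induces a map of fibrations $\Mw\to\Bw$ over $BSO$. Define $\widehat\Omega_n(\Mw)$ and $\widehat\Omega_n(\Bw)$ as bordism groups of closed $n$-manifolds carrying a normal structure (Definition~\ref{type}) over $\Mw$, resp.\ $\Bw$, together with a reference map refining $u_M$, and define $\widehat\Omega_n(\Bw,\Mw)$ by the usual mapping-cylinder construction applied to $\Mw\to\Bw$. This produces the long exact bordism sequence of the pair,
$$\cdots\to\widehat\Omega_{n+1}(\Bw,\Mw)\to\widehat\Omega_n(\Mw)\to\widehat\Omega_n(\Bw)\to\widehat\Omega_n(\Bw,\Mw)\to\cdots,$$
and, comparing with the plain bordism groups, the companion sequences involving $\Omega_4(\Bw)$ and $\Omega_5(\Bw)$; specializing $n$ to $4$ and $5$ gives the bottom strand of the braid.

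Next I would realize the self-equivalence and structure sets inside this framework. A class in $\whept M$ is a map $\widehat f\colon M\to\Mw$ with $f:=j\circ\widehat f$ a self-homotopy equivalence of $M$ and $\xi\circ\widehat f=\nu_M$, i.e.\ a degree-one normal datum on $M$ over $\Mw$; comparing it with the tautological structure on $M$ yields a class in the relevant $\widehat\Omega$-group, and the assignment is additive because $\widehat f\bullet\widehat g$ composes the underlying maps and stacks the resulting bordisms. By Lemma~\ref{techB} a class in $\whept B$ extends uniquely to $\phi_f\colon B\to B$ and produces in the same way a class in $\Omega_4(\Bw)$, which is the map $\beta$; a structure $h\colon N\to M$ representing a class of $\whtildeM$ descends on $2$-types to a self-equivalence of $B$, giving $\whtildeM\to\whept B$; and a self-equivalence of $M$ is in particular such a structure, giving $\whept M\to\whtildeM$. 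In each case the thickening by $H^1(M;\cy2)$ appearing in $0\to H^1(M;\cy2)\to\whept M\to\hept M\to1$ is exactly the indeterminacy in lifting a self-equivalence of $M$ (or of $B$) through $j$, so after thickening all of these assignments are well-defined group homomorphisms.

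The three long exact sequences of the braid are then: (1) the pair sequence of $(\Bw,\Mw)$ from the second step, whose connecting map is $\widehat\Omega_5(\Bw,\Mw)\to\widehat\Omega_4(\Mw)$; (2) the modified-surgery sequence for $M$, in which $\Omega_5(\Bw)\to\whtildeM\to\whept B\xrightarrow{\beta}\Omega_4(\Bw)$ records passage to the $2$-type, exactness being essentially Kreck's theorem combined with Lemma~\ref{techB}; and (3) the composite sequence through $\whept M$ obtained by overlaying (1) and (2) along their common terms. Exactness at each node then reduces, by a diagram chase, to exactness of the ordinary pair sequence and to the surjectivity and injectivity built into the $H^1(M;\cy2)$-thickening. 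For sign-commutativity one inspects each elementary square: the naturality squares of the bordism sequences and the squares induced by the functor ``pass to the $2$-type'' commute strictly, whereas the square at $\whept M$ combines a connecting homomorphism of a pair sequence with an orientation reversal of a cobordism, and the resulting sign is precisely the assertion that the two composites into $\whept M$ agree up to inversion. The step I expect to be the main obstacle is this bookkeeping: choosing the definitions of the hatted bordism groups and of the thickened sets so that the self-equivalence and structure sets sit in the three sequences with the stated neighbours, so that the natural maps are exactly $\alpha$, $\beta$, $\gamma$ and the three connecting maps, and so that the overlapping triangles are mutually sign-consistent, leaving only one global sign ambiguity, on the two arrows into $\whept M$.
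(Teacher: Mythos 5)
Note first that the paper you are working from does not prove this statement at all: the braid is quoted verbatim from Hambleton--Kreck \cite{hk2} (the present authors later invoke ``the exactness of the braid \cite[Lemma 2.7]{hk2}''), so the only proof to compare with is the one in that reference. Measured against it, your outline points in the right general direction --- the bordism pair sequence for $\Mw\to\Bw$ plus Kreck's modified surgery to realize and improve bordisms --- but it has genuine gaps rather than being a compressed version of the argument. The most visible one is the corner $\pi_1(\heqpt{B,w_2})$: two of the braid's exact sequences emanate from it, and your proposal never says what this group is, how it maps to $\Omega_5(\Bw)$ and to $\widehat\Omega_5(\Bw,\Mw)$, or why exactness holds there; this requires an identification of homotopies of structured self-equivalences with elements acting on the bordism and structure groups (a James--Thomas/Wall-type realization step in \cite{hk2}), and no amount of overlaying the other sequences produces it.

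Second, the claim that the third strand ``through $\whept{M}$'' is obtained by overlaying the other two, with exactness then following ``by a diagram chase,'' does not work: exactness at $\whept{M}$, $\whtildeM$ and $\whept{B}$ is not formal. It needs the geometric core of \cite{hk2} --- surgery below the middle dimension to replace a bordism over $\Bw$ by one over $\Mw$ with $3$-equivalent boundary inclusions, the obstruction theory over the fibre $K(\cy2,1)$ that produces the $H^1(M;\cy2)$-thickenings, and the subtraction construction that defines $\gamma$ on $\widehat\Omega_5(\Bw,\Mw)$ --- and the assertion that the two composites into $\whept{M}$ agree up to inversion is a specific verification tied to that construction, not sign bookkeeping. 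Finally, some of your identifications of the maps do not match the definitions in force: elements of $\whtildeM$ are bordisms $(W,\widehat F)$ from $\widehat{\id}_M$ to some $\widehat f\colon M\to\Mw$, not degree-one normal maps $h\colon N\to M$, and the braid contains no arrow $\whept{M}\to\whtildeM$; the arrows are $\whtildeM\to\whept{M}$ (restriction to $\bd_2W$), $\whept{M}\to\whept{B}$, and $\alpha\colon\whept{M}\to\widehat\Omega_4(\Mw)$. As it stands the proposal is a plausible plan for reconstructing the proof of \cite{hk2}, but the exactness statements that make the braid useful are exactly the ones left unproved.
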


During the calculation of the terms on the above braid, we will be interested in certain subgroups of $\hept{B}$ 
and $ \whept{B}$.  Before we introduce these subgroups let us define a homomorphism
                            $$
                                     \widehat{j}\colon \whept{B} \to \hept{B} \quad \textrm{by} \quad
                                     \widehat{j}(\widehat{f})=\phi_f
                            $$ 
where \ $\phi_f\colon B\to B$ \ is the unique homotopy equivalence with \ $\phi_f\circ c \simeq f$, and the following
subgroup of $\whept{B}$
                            $$ 
                               \Isom^{\langle w_2\rangle}\quadtypecM:=\{\widehat{f}\in \whept{B}~|~ \phi_f\in
                               \Isom \quadtypecM \}    
                            $$ 
where $\Isom \quadtypecM := \{\phi \in \hept{B} \,| \, \phi_*(c_*[M]) = c_*[M]\}\ $.

\begin{lemma}\label{hat2}
There is a short exact sequence of groups
                            $$
                                 \xymatrix{0\ar[r]&H^1(M;\cy2)\ar[r]& 
                                 \Isom^{\langle w_2\rangle} \quadtypecM\ar[r]^(0.55){\widehat{j}}& 
                                 \Isom \quadtypecM \ar[r]&1}
                            $$                   
\end{lemma}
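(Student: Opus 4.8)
The plan is to obtain the stated sequence by restricting, along the inclusion $\Isom\quadtypecM\hookrightarrow\hept{B}$, the left exact sequence of groups
$$0\longrightarrow H^1(M;\cy2)\longrightarrow\whept{B}\xrightarrow{\ \widehat{j}\ }\hept{B}$$
from \cite{hk2} (the $B$-analogue of the sequence $0\to H^1(M;\cy2)\to\whept{M}\to\hept{M}\to1$ recalled above; here the image of $H^1(M;\cy2)$ is $\ker\widehat{j}$, the set of lifts $\widehat{f}\colon M\to\Bw$ of the classifying map $c\colon M\to B$ with $\xi\circ\widehat{f}=\nu_M$, a torsor over $H^1(M;\cy2)$). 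By definition $\Isom^{\langle w_2\rangle}\quadtypecM=\widehat{j}^{-1}(\Isom\quadtypecM)$, and since $\id_B\in\Isom\quadtypecM$ we have $\ker\widehat{j}=H^1(M;\cy2)\subseteq\Isom^{\langle w_2\rangle}\quadtypecM$. Hence restriction yields a left exact sequence
$$0\longrightarrow H^1(M;\cy2)\longrightarrow\Isom^{\langle w_2\rangle}\quadtypecM\xrightarrow{\ \widehat{j}\ }\Isom\quadtypecM,$$
with $H^1(M;\cy2)$ mapped onto a normal subgroup, because both the kernel and its normality are inherited from the bottom row ($\ker(\widehat{j}|_{\Isom^{\langle w_2\rangle}\quadtypecM})=\ker\widehat{j}\cap\Isom^{\langle w_2\rangle}\quadtypecM=\ker\widehat{j}$). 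So the whole content of the lemma is the surjectivity of $\widehat{j}\colon\Isom^{\langle w_2\rangle}\quadtypecM\to\Isom\quadtypecM$.

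For surjectivity, take $\phi\in\Isom\quadtypecM$ and put $f=\phi\circ c\colon M\to B$. By Lemma~\ref{techB} (and the definition of $\whept{B}$) this base-point preserving $3$-equivalence gives rise to some $\widehat{f}\in\whept{B}$ with $\widehat{j}(\widehat{f})=\phi_f=\phi$ as soon as $f^*w_2=w_2(M)$ in $H^2(M;\cy2)$; and then $\phi_f=\phi\in\Isom\quadtypecM$ places $\widehat{f}$ in $\widehat{j}^{-1}(\Isom\quadtypecM)=\Isom^{\langle w_2\rangle}\quadtypecM$, as wanted. Since $f^*w_2=c^*(\phi^*w_2)$ and $c^*w_2=w_2(M)$, it is enough to prove $\phi^*w_2=w_2$ in $H^2(B;\cy2)$.

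For this last point, consider the symmetric $\cy2$-bilinear form $q_M(a,b)=\langle a\cup b,c_*[M]\rangle$ on $H^2(B;\cy2)$. Because $c$ is a $3$-equivalence, $c^*$ identifies $H^2(B;\cy2)$ with $H^2(M;\cy2)$ and $q_M$ with the mod $2$ intersection form of $M$, which is non-degenerate by Poincar\'e duality. As $\phi^*$ is a ring automorphism of $H^*(B;\cy2)$ satisfying $\phi_*(c_*[M])=c_*[M]$, it is an isometry of $q_M$; and by the Wu formula (using $w_1(M)=0$) the characteristic class of $q_M$ — the unique $v$ with $q_M(v,y)=q_M(y,y)$ for all $y$ — is the mod $2$ reduction of $w_2$. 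Since any isometry of a non-degenerate symmetric form fixes its characteristic class, $\phi^*w_2=w_2$, as required. This identification of the characteristic class of $q_M$ with $w_2$ is the only step that is not a formal manipulation of the ladder above, and it is the one place where Poincar\'e duality and the Wu formula for the $4$-manifold $M$ are used.
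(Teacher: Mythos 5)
Your proposal is correct, but it reaches surjectivity by a genuinely different route than the paper. The paper's proof first realizes a given $\phi\in\Isom\quadtypecM$ by a homotopy self-equivalence $f$ of $M$ with $c\circ f\simeq\phi\circ c$, quoting \cite[Lemma 1.3]{hk1}, then uses \cite[Lemma 3.1]{hk2} to promote $(f,\nu_M)$ to an element of $\whept{M}$, and pushes forward to $\whept{B}$. You never produce a self-map of $M$ at all: you lift $\phi\circ c$ directly to $\Bw$, note that the sole obstruction is $c^*\phi^*w_2=w_2(M)$, and remove it by showing $\phi^*$ is an isometry of the nondegenerate mod $2$ cup-product form on $H^2(B;\cy2)$ whose characteristic element is $w_2$ (Wu formula, $w_1=0$), so $\phi^*w_2=w_2$. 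This trades the nontrivial realization result of Hambleton--Kreck for elementary Poincar\'e duality over $\cy2$, and it makes explicit why preservation of $c_*[M]$ forces preservation of $w_2$ -- a fact the paper uses again, without proof, in the following lemma on $\ker\beta$. For exactness at the left and middle, however, you simply quote ``the $B$-analogue'' of the sequence $0\to H^1(M;\cy2)\to\whept{M}\to\hept{M}\to 1$, whereas the paper proves exactly this point: by obstruction theory for the fibration $K(\cy2,1)\to\Bw\to B\times BSO$ it shows that two elements of $\Isom^{\langle w_2\rangle}\quadtypecM$ with the same image under $\widehat{j}$ differ by a class in $H^1(M;\cy2)$, and conversely, for every $\alpha\in H^1(M;\cy2)$ it constructs (via the homotopy lifting property for the map to $K(\cy2,2)$) a lift $\widehat{g}$ whose difference from a given $\widehat{f}$ is precisely $\alpha$. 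If that $B$-statement is not literally available in \cite{hk2}, you should supply this short obstruction computation, since it is the only content hiding behind your torsor assertion; both you and the paper leave implicit the routine check that $\widehat{j}$ is a homomorphism of groups.
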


\begin{proof}
For any $\phi \in \Isom \quadtypecM$, we have an $f\in \hept{M}$ such
that $c\circ f \simeq \phi \circ c$ (this is basically by \cite[Lemma 1.3]{hk1} ). 
We may assume that the pair $(f, \nu_M)$ is an element of
$\whept{M}$ ( \cite[Lemma 3.1] {hk2} ).  The pair $(c\circ f,
\nu_M)$ determines an element $\widehat{f}$ of $\whept{B}$ for which 
$\widehat{j}(\widehat{f})=\phi_f=\phi$. 

Suppose now that $\widehat{f}$, $\widehat{g}\in \Isom^{\langle w_2\rangle}
\quadtypecM$ such that $h:\phi_f\simeq \phi_g$.  We have the
following diagram

                           $$
                                 \xymatrix{& & K(\cy 2,
                                  1)\ar[d]&\cr \bd(M\times I)\ar@{^{(}->}[d]
                                  \ar[rr]^(0.55){\widehat{f}\sqcup \widehat{g}}& &
                                  \Bw\ar[d]^(0.45){(j, \xi)}&\cr M\times I\ar@{-->}[urr]
                                  \ar[rr]_(0.45){(h\circ c\times \id, \nu_M\circ p_1)}& & B\times
                                  BSO \ .}
                           $$
\vskip .2cm
The obstructions to lifting $(h\circ c\times \id, \nu_M\circ
p_1)$ lie in the groups
                  $$
                     H^{i+1}(M\times I, \bd(M\times I); \pi_i(K(\cy 2, 1)))\cong H^i(M;
                     \pi_i(K(\cy 2, 1))) ,
                  $$  
hence the only non-zero obstructions are in $H^1(M; \cy 2)$. 

Let $\widehat{f}\in \whept{M}$, for any $\alpha \in H^1(M; \cy 2)$, 
we will construct a \ $\widehat{g} \in \whept{M}$ with the
property that $f\simeq g$ and the obstruction to $\widehat{f}$ and
$\widehat{g}$ being equivalent is $\alpha$.  Note that different
maps $M\times I \to K(\cy 2, 2)$ relative to the given maps on the
boundary are also classified by $H^1(M; \cy2)$. So we may think
$\alpha \colon M\times I \to K(\cy 2, 2)$ such that $\alpha
|_{M\times \{0\}}$ and $\alpha|_{M\times \{1\}}$ is the constant map
to the base point $\{ \ast \}$ of $K(\cy 2, 2)$.  Consider the
following diagram 
                 $$
                    \xymatrix{ & & \Mw \ar[d]^(0.5){(j, \xi)}&\cr
                    M\times \{0\}\ar@{^{(}->}[d] \ar[rr]^(0.5){(f, \nu_M)}& & M\times
                    BSO\ar[d]^{\rho} & \cr M\times I\ar@{-->}[urr]^{\widehat{\alpha}}
                    \ar[rr]^{\alpha}& & K(\cy2, 2)}
                 $$
\vskip .1cm

The fibration $\rho\colon
M\times BSO\to K(\cy2, 2)=\Omega K(\cy 2, 3)$ is given by $(x, y)
\to w_2(x)-w(y)$, for which the fiber over the base point is by
definition $\Mw$.  By the homotopy lifting property we have
$\widehat{\alpha} \colon M\times I\to M\times BSO$ making the
diagram commutative.  

Let $\widehat{g}:= \widehat{\alpha}|_{M\times
\{1\}}$, then since $w_2(p_1\circ \widehat{g}(x))=w(p_2\circ
\widehat{g}(x))$, where $p_1$ and $p_2$ are projections to the first
and second components respectively, $\widehat{g}$ actually gives us
a map $M \to \Mw$. Observe that $p_1\circ \widehat{\alpha} \colon
M\times I \to M$ is a homotopy between $f$ and $g$.  In order to
lift this homotopy to $\Mw$, we should have $w_2((p_1\circ \widehat{\alpha})(x,
t))=w((p_2\circ \widehat{\alpha})(x, t))$ for all $x\in M$ and $t\in
I$, which is possible if and only if $\alpha$ represents the trivial
map. Hence $\alpha$ is the obstruction to $\widehat{f}$ and
$\widehat{g}$ being equivalent. 
\end{proof}

\begin{lemma}
The kernel of $\beta$, $\ker(\beta):=\beta^{-1}(0)$,
is equal to $\Isom^{\langle w_2\rangle}\quadtypecM$.
\end{lemma}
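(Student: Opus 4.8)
The plan is to prove the two inclusions separately. First recall that for $\widehat{f}\in\whept{B}$ the class $\beta(\widehat{f})\in\Omega_4(\Bw)$ is the bordism class over $\Bw$ of the normal $2$-smoothing $\widehat{f}$ of $M$, normalised so that the reference smoothing $\widehat{c}$ (the identity of $\whept{B}$, whose extension is $\id_B$) goes to $0$; so $\beta(\widehat{f})=0$ exactly when $(M,\widehat{f})$ and $(M,\widehat{c})$ are bordant over $\Bw$. The auxiliary tool I would use is the natural homomorphism $\mu\colon\Omega_4(\Bw)\to H_4(B;\bZ)$ sending $[N,\widehat{g}]$ to $(j\circ\widehat{g})_*[N]$, well defined by the usual argument with the relative fundamental class of a bordism.

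For $\ker(\beta)\subseteq\Isom^{\langle w_2\rangle}\quadtypecM$: if $\beta(\widehat{f})=0$ then applying $\mu$ gives $(j\circ\widehat{f})_*[M]=(j\circ\widehat{c})_*[M]$ in $H_4(B;\bZ)$. By Lemma~\ref{techB}, $j\circ\widehat{f}=f\simeq\phi_f\circ c$ while $j\circ\widehat{c}=c$, so this reads $(\phi_f)_*(c_*[M])=c_*[M]$, i.e.\ $\phi_f\in\Isom\quadtypecM$, hence $\widehat{f}\in\Isom^{\langle w_2\rangle}\quadtypecM$. For the reverse inclusion I would first identify $\Isom^{\langle w_2\rangle}\quadtypecM$ with the image of the braid map $\whept{M}\to\whept{B}$, which sends $(g,\nu_M)$ with $g\in\hept{M}$ to the class of $(c\circ g,\nu_M)$. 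Indeed $\im(\whept{M}\to\whept{B})\subseteq\Isom^{\langle w_2\rangle}\quadtypecM$, because the extension $\phi$ of $c\circ g$ satisfies $\phi\circ c\simeq c\circ g$, so $\phi_*(c_*[M])=(c\circ g)_*[M]=c_*(g_*[M])=c_*[M]$ since $g$ preserves orientation; conversely, given $\phi\in\Isom\quadtypecM$, \cite[Lemma~1.3]{hk1} yields $g\in\hept{M}$ with $c\circ g\simeq\phi\circ c$, \cite[Lemma~3.1]{hk2} lets $(g,\nu_M)$ be taken in $\whept{M}$, and its image in $\whept{B}$ realises $\phi$ under $\widehat{j}$, while the remaining indeterminacy lies in $\ker(\widehat{j})=H^1(M;\cy 2)$, which is also contained in $\im(\whept{M}\to\whept{B})$ because $H^1(M;\cy 2)=\ker(\whept{M}\to\hept{M})$ maps onto it --- this is the bookkeeping underlying Lemma~\ref{hat2}. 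Finally, $\beta$ kills $\im(\whept{M}\to\whept{B})$ because $\whept{M}\to\whept{B}\xrightarrow{\beta}\Omega_4(\Bw)$ are consecutive terms of an exact sequence of the braid.

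The step I expect to be the main obstacle is this last one: identifying precisely the exact sequence of the braid of \cite{hk2} through $\whept{B}$ --- in particular, checking that the term preceding $\beta$ is the image of $\whept{M}$ rather than, say, that of $\whtildeM$, and verifying exactness there --- together with matching up the three copies of $H^1(M;\cy 2)$ sitting inside $\whept{M}$, $\whept{B}$ and $\Isom^{\langle w_2\rangle}\quadtypecM$. By contrast the forward inclusion and the realisation of isometries of the quadratic $2$-type by self-equivalences of $M$ are soft, the latter being a direct appeal to \cite{hk1,hk2}.
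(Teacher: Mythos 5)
Your first inclusion, $\ker(\beta)\subseteq\Isom^{\langle w_2\rangle}\quadtypecM$, is exactly the paper's argument: the map you call $\mu$ is the edge projection $\Omega_4(\Bw)\to H_4(B)$ of the Atiyah--Hirzebruch spectral sequence, and combining it with Lemma~\ref{techB} gives $\phi_{f*}(c_*[M])=c_*[M]$. The problem is the reverse inclusion, and it is precisely the point you flagged as the likely obstacle. In the braid of \cite{hk2} the exact sequence passing through $\beta$ is
$$\Omega_5(\Bw)\longrightarrow\whtildeM\longrightarrow\whept{B}\xrightarrow{\ \beta\ }\Omega_4(\Bw),$$
so the term preceding $\beta$ is $\whtildeM$, not $\whept{M}$; the arrow $\whept{M}\to\whept{B}$ lies on the other strand, which terminates at $\whept{B}$, and neither exactness nor even vanishing of the composite $\whept{M}\to\whept{B}\xrightarrow{\beta}\Omega_4(\Bw)$ is part of the braid's data. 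What the braid does give formally (commutativity of the lens $\whtildeM\to\whept{M}\to\whept{B}$ versus $\whtildeM\to\whept{B}$, plus exactness at $\whept{B}$) is the containment $\ker\beta=\im(\whtildeM\to\whept{B})\subseteq\im(\whept{M}\to\whept{B})$, i.e.\ the opposite of what you need. The statement you are trying to quote --- that $\beta$ kills the image of $\whept{M}$, i.e.\ that $[M,(c\circ f,\nu_M)]=[M,\widehat{c}\,]$ in $\Omega_4(\Bw)$ for every $\widehat{f}\in\whept{M}$ --- is exactly the nontrivial content of the inclusion $\Isom^{\langle w_2\rangle}\quadtypecM\subseteq\ker(\beta)$: since $f$ is only a homotopy equivalence, not a homeomorphism, there is no tautological bordism between the two smoothings. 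Note also that in the paper the identification of $\im(\whtildeM\to\whept{B})$ with $\Isom^{\langle w_2\rangle}\quadtypecM$ is a corollary deduced \emph{from} this lemma, so it cannot be fed back in.

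The paper closes this gap by a direct bordism computation rather than by exactness: it computes the difference $\beta(\widehat{f})=[M,\widehat{f}\,]-[M,\widehat{c}\,]$ through the Atiyah--Hirzebruch filtration of $\Omega_4(\Bw)$, whose only nonzero $E^2$-entries are $H_0(B;\Ospin_4(\ast))\cong\bZ$, $H_2(B;\cy2)$, $H_3(B;\cy2)$ and $H_4(B)$. If $\phi_f$ preserves $c_*[M]$, the $H_4(B)$-component of the difference vanishes; the $(0,4)$-component vanishes because both classes are carried by the same manifold $M$; and the surviving contribution in filtration $(2,2)$ lies in $\coker\bigl(d_2\colon H_4(B;\cy2)\to H_2(B;\cy2)\bigr)\cong\langle w_2\rangle$, which dies because $w_2$ is preserved by a self-homotopy equivalence. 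Your realisation step (producing, for each $\phi\in\Isom\quadtypecM$, a lift in the image of $\whept{M}$ via \cite[Lemma 1.3]{hk1}, \cite[Lemma 3.1]{hk2} and the $H^1(M;\cy2)$ bookkeeping) is sound, but by itself it only reproves the paper's corollary about $\im(\whept{M}\to\whept{B})$; to finish you must replace the appeal to exactness by an argument of the above type showing that this image is annihilated by $\beta$.
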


\begin{proof}
The map $\beta\colon \whept{B} \to \Omega_4(\Bw)$ is defined by
$\beta(\widehat{f})=[M, \widehat{f}\,] - [M, \widehat{c}\,]$.
For the bordism group $\Omega_4(\Bw)$, we use the Atiyah-Hirzebruch spectral 
sequence, whose $E^2$-term is $H_p(M; \Ospin_q(\ast))$. 

The non-zero terms on the $E^2$-page are $H_0(B; \Ospin_4(\ast))\cong \bZ$ in the $(0, 4)$
position, $H_2(B; \cy2)$ in the $(2, 2)$ position, $H_3(B; \cy2)$ in
the $(3, 1)$ position and $H_4(B)$ in the $(4, 0)$ position.  To understand the kernel,  
we use  the projection to $H_4(B)$.

Let $\widehat{f}\in \whept{B}$ and suppose first that $\widehat{f}\in \ker \beta$, then 
$(j\circ \widehat{f})_*[M]=c_*[M]$.  But since $(j\circ \widehat{f})$ is a
$3$-equivalence, there exists $\phi \in \hept{B}$ with $\phi \circ
c= j\circ \widehat{f}$  (recall Lemma \ref{techB}). So, $\phi_*(c_*[M])=c_*[M]$ which means
$\widehat{j}(\widehat{f})=\phi \in \Isom \quadtypecM $. Therefore
$\ker(\beta)\subseteq \Isom^{\langle w_2\rangle} \quadtypecM$.  To
see the other inclusion note that 
            $$
                 \coker(d_2\colon H_4(B; \cy2)\to
                 H_2(B; \cy2)) \cong \langle w_2\rangle
            $$ 
and the class $w_2$ is preserved by a self-homotopy equivalence.
\end{proof}

\begin{definition} (\cite{hk2})
Let $\whtildeM$ denote the bordism groups of
pairs $(W, \widehat F)$, where $W$ is a compact,
oriented $5$-manifold with $\bd_1 W = -M$, $\bd_2 W = M$ and
the map $\widehat F \colon W \to \Mw$ restricts to $\widehat{id}_M$
on $\bd_1 W$, and on $\bd_2 W$ to a map $\hat f\colon M \to \Mw$ satisfying
properties (i) and (ii) of Definition \ref{thickaut} .
\end{definition}

\begin{corollary}
The images of $\whept{M}$ or $\whtildeM$ in
$\whept{B}$ are precisely equal to $\Isom^{\langle w_2\rangle}
\quadtypecM $.
\end{corollary}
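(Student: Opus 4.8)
The plan is to identify the image of $\whept{M}$ in $\whept{B}$ and then show it agrees with the image of $\whtildeM$, using the braid and the two lemmas just established. First I would analyze the natural map $\whept{M}\to\whept{B}$ appearing in the braid: an element $\widehat f\in\whept{M}$ is (a class of) a map $\widehat f\colon M\to\Mw$ with $j\circ\widehat f$ an orientation- and base-point-preserving homotopy equivalence of $M$ and $\xi\circ\widehat f=\nu_M$; composing with $c$ (more precisely, using the pair $(c\circ f,\nu_M)$ as in the proof of Lemma \ref{hat2}) produces an element of $\whept{B}$. By Lemma \ref{techB} this element maps under $\widehat j$ to the homotopy equivalence $\phi_f\colon B\to B$ with $\phi_f\circ c\simeq c\circ f$, and since $f_*[M]=[M]$ (as $f$ is an orientation-preserving homotopy equivalence) we get $\phi_f{}_*(c_*[M])=c_*[M]$, i.e. $\phi_f\in\Isom\quadtypecM$. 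Hence the image of $\whept{M}$ in $\whept{B}$ lands inside $\Isom^{\langle w_2\rangle}\quadtypecM$. Conversely, given any $\widehat f\in\Isom^{\langle w_2\rangle}\quadtypecM$, its image $\phi_f$ lies in $\Isom\quadtypecM$, and by \cite[Lemma 1.3]{hk1} there is $f\in\hept{M}$ with $c\circ f\simeq\phi_f\circ c$; by \cite[Lemma 3.1]{hk2} we may lift $(f,\nu_M)$ to an element of $\whept{M}$, and tracing definitions this element maps to $\widehat f$. This is exactly the argument already run inside the proof of Lemma \ref{hat2}, so the image of $\whept{M}$ is precisely $\Isom^{\langle w_2\rangle}\quadtypecM$.

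Next I would handle $\whtildeM$. The image of $\whtildeM$ in $\whept{B}$ is by definition obtained by sending a bordism $(W,\widehat F)$ to the class of its restriction $\widehat f$ to $\bd_2 W$; this only remembers $\phi_f$, and the same computation as above (using that $\phi_f$ preserves $c_*[M]$ because $\widehat f$ satisfies (i) and (ii) of Definition \ref{thickaut}) shows the image is contained in $\Isom^{\langle w_2\rangle}\quadtypecM$. For the reverse inclusion, the cleanest route is to factor through $\whept{M}$: the braid contains the map $\whept{M}\to\whept{B}$ factoring (up to the sign-commutativity) through $\whtildeM\to\whept{B}$ via $\alpha$ and $\gamma$, or more directly one observes that any $\widehat f\in\whept{M}$ gives the product bordism $(M\times I,\widehat f\circ p_1)$ — suitably interpreted as a bordism rel the appropriate boundary conditions — representing an element of $\whtildeM$ whose image in $\whept{B}$ is the same as that of $\widehat f\in\whept{M}$. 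Since the image of $\whept{M}$ already exhausts $\Isom^{\langle w_2\rangle}\quadtypecM$, so does the image of $\whtildeM$.

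Putting the two halves together: both $\whept{M}$ and $\whtildeM$ map onto $\Isom^{\langle w_2\rangle}\quadtypecM$ and into it, hence their images are exactly $\Isom^{\langle w_2\rangle}\quadtypecM$, which is the assertion of the corollary. I expect the main obstacle to be bookkeeping rather than conceptual: one must be careful that the "image" maps are the ones appearing in the braid (and that the sign-commutativity does not cause trouble, which it does not since we only care about images as sets/subgroups), and one must check that the product-bordism construction genuinely produces an element of $\whtildeM$ with the stated restriction behavior on $\bd_1 W=-M$ and $\bd_2 W=M$ — i.e. that it restricts to $\widehat{id}_M$ on $\bd_1 W$ and to $\widehat f$ on $\bd_2 W$ while satisfying $\xi\circ\widehat F=\nu_M\circ(\text{projection})$. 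Once that compatibility is in place, the corollary follows immediately from the two inclusions established above together with the surjectivity argument already embedded in the proof of Lemma \ref{hat2}.
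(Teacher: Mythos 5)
Your first half (the image of $\whept{M}$) runs essentially the same argument as the paper: containment via Lemma \ref{techB} and preservation of $c_*[M]$, and surjectivity via \cite[Lemma 1.3]{hk1}, \cite[Lemma 3.1]{hk2}, with the $H^1(M;\cy 2)$-ambiguity absorbed by the construction in the proof of Lemma \ref{hat2}. (Strictly, the lift you build maps to an element with the same $\widehat{j}$-image as $\widehat f$, not necessarily to $\widehat f$ itself; the second half of the proof of Lemma \ref{hat2} is what lets you adjust by an arbitrary class in $H^1(M;\cy2)$, so this is harmless.)

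The second half has a genuine gap. The pair $(M\times I,\widehat f\circ p_1)$ is not an element of $\whtildeM$: by definition the map must restrict to $\widehat{id}_M$ on $\bd_1W$ and to $\widehat f$ on $\bd_2W$, whereas your product bordism restricts to $\widehat f$ on both ends, and a map $M\times I\to\Mw$ with the required boundary behaviour is precisely a homotopy from $\widehat{id}_M$ to $\widehat f$, which need not exist. More importantly, no choice of $W$ repairs this in general: asking that $\widehat f\in\whept{M}$ be realized by some $(W,\widehat F)\in\whtildeM$ is asking that $\widehat f$ lie in the image of the braid map $\whtildeM\to\whept{M}$, which by exactness of the braid at $\whept{M}$ is $\ker\alpha$; the obstruction is essentially $[M,\widehat f]-[M,\widehat{id}_M]\in\widehat\Omega_4(\Mw)$, and nothing in your argument shows it vanishes (if it always did, $\alpha$ would be zero, which is exactly the kind of statement the braid is built to track, not a formality). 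Note also that your proposed factorization goes the wrong way: the braid arrow is $\whtildeM\to\whept{M}$, so a priori the image of $\whtildeM$ in $\whept{B}$ is only \emph{contained} in that of $\whept{M}$. The correct route — and the paper's — is to get surjectivity for $\whtildeM$ not through $\whept{M}$ at all, but from exactness of the braid at $\whept{B}$ (\cite[Lemma 2.7]{hk2}): the image of $\whtildeM\to\whept{B}$ equals $\ker\beta$, and the preceding lemma identifies $\ker\beta$ with $\Isom^{\langle w_2\rangle}\quadtypecM$.
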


\begin{proof}
Let $\widehat{f}\in \whept{M}$ and $\phi_{\widehat{f}}$ denote the image of $\widehat{f}$ 
in $\whept{B}$.  Then $\widehat{j}(\phi_{\widehat{f}})= \phi_f$ satisfies $\phi_f\circ c=c\circ f$ 
and $\phi_f$ preserves $c_*[M]$.  Hence $\phi_f \in \Isom \quadtypecM $.  Now suppose that 
$\phi \in \Isom \quadtypecM$, then by \cite[Lemma 1.3]{hk1} there exists $f\in \hept{M}$ such that
$\phi \circ f \simeq c\circ f$.  We may assume that $\widehat{f}=(f, \nu_M) \in \whept{M}$ \cite[Lemma 3.1]{hk2}.  Let
$\phi_{\widehat{f}}\in \whept{B}$ denote  the image of $\widehat{f}$, we have $\widehat{j}(\phi_{\widehat{f}})= \phi$.

The result about the image of $\whtildeM$ follows from the exactness of the braid \cite[Lemma 2.7]{hk2} 
and the fact that $\ker(\beta)=\Isom^{\langle w_2\rangle}\quadtypecM$.
\end{proof}

\begin{remark}\label{induced}
By universal coefficient spectral sequence, we have an exact
sequence 
                   $$
                        \xymatrix{0\ar[r]& H^2(\pi; \Lambda)\ar[r]&H^2(M; \La)
                        \ar[r]^(0.45){\ev}&\Hom_{\Lambda}(\pi_2, \Lambda )\ar[r]&0}
                   $$ 
and the cohomology intersection pairing is defined by $s_M(u, v)=
ev(v)(PD(u))$ for all $u, v\in H^2(M; \La)$ where $PD$ is the
Poincar$\acute{e}$ duality isomorphism.  Since $s_M(u, v)=0$ for all
$u\in H^2(M; \La)$ and $v\in H^2(\pi; \Lambda)$, the pairing $s_M$
induces a nonsingular pairing
                   $$
                        s'_M\colon H^2(M; \La)/H^2(\pi; \Lambda)
                        \times H^2(M; \La)/H^2(\pi; \Lambda)\to \La \ . 
                   $$
\end{remark}

Before we finish this section, let us point out that for our purposes we need to look for 
a relation between the image of the fundamental class $c_*[M]\in H_4(B)$ and the equivariant
intersection pairing $s_M$.  Let $\Her(H^2(B; \La))$ be
the group of Hermitian pairings on $H^2(B; \La)$.  We can define a natural
map $F\colon H_4(B)\to Her(H^2(B; \La))$ by
             $$
                  F(x)(u, v) = u(x \cap v)=(u\cup v)(x) \ .
             $$ 
The construction of $F$ applied to $M$ yields $s_M$ and by naturality $F(c_*[M])=s_M$.
In other words, we have the following commutative diagram

              $$
                     \xymatrix{H^2(B; \La)\times H^2(B; \La) \ar[r]^(0.6){F(c_*[M])}
                     \ar[d]^{\cong}_{c^*\times c^*}& \La \cr H^2(M; \La)\times H^2(M;
                     \La)\ar@{=}[r] & H^2(M; \La)\times H^2(M; \La)\ar[u]_{s_M} \ .}
              $$

Therefore any automorphism of $B$ which preserves $c_*[M]$, also preserves the intersection
form $s_M$.  The converse of this statement is not necessarily true, i.e.,  $c_*[M]$ and
$s_M$ do not always uniquely determine each other.

\section{s-Cobordism}

In this section we are going to prove Theorem \ref{main}.  Let $M$ be a closed, connected, oriented, 
topological $4$-manifold with fundamental group $\pi$ such that $\cd \pi \leq 2$.  We study bordism 
classes of such manifolds over the normal $1$-type.  

For type (I) manifolds, $w_2(\widetilde{M})\neq 0$, oriented topological bordism 
group over the normal $1$-type is 
                  $$
                         \Omega_4^{STOP}(K(\pi, 1))\cong
                         \Omega_4^{STOP}(\ast)\cong \bZ\oplus \cy2
                  $$ 
via the signature, $\sigma(M)$, and the $ks$-invariant.  
Recall that $\sigma(M)$ is determined
via the integer valued intersection form $s_M^{\bZ}$ on $H_2(M)$.  Since the image 
                  $$
                        \xymatrix{H^2(\pi; \bZ)\ar[r]^(0.45){u_M}&H^2(M; \bZ)}
                  $$ 
is the radical of $s_M^{\bZ}$
$\sigma(M)$ is equal to the signature of the form $s_M\otimes_{\La}\bZ$  \cite[Remark 4.2]{hkt}.
Therefore when $\cd \pi \leq 2$, the signature of $M$ is determined by the formula 
                 $$
                      \sigma(M)=\sigma(s_M^{\bZ})=\sigma(s_M\otimes_{\La}\bZ) \ .
                 $$
On the other hand, in the type (II)  case, $w_2(\widetilde{M}) = 0$, we have 
                 $$
                      \Omega_4^{TOPSPIN}(K(\pi, 1))\cong \bZ \oplus H_2(\pi; \cy2) \ .
                 $$ 
In this case, the invariants are signature and an invariant in $H_2(\pi; \cy2)$.  

Now, let $M_1$ and $M_2$ be closed, connected, oriented, topological $4$-manifolds with isomorphic 
fundamental groups.  By fixing an isomorphism, we identify $\pi=\pi_1(M_1)=\pi_1(M_2)$.  Suppose also that 
$\cd \pi \leq 2$.  Suppose further that $M_1$ and $M_2$ have isometric quadratic $2$-types.  First we are going 
to show that $M_1$ and $M_2$ are homotopy equivalent  by using  \cite[Corollary 3.2]{baues-bleile}.  
Then we are going to show that they are indeed bordant over the normal $1$-type, if we further assume that $\pi$ 
satisfies (A1).    

Since $M_1$ and $M_2$ have isometric quadratic $2$-types, we have
                  $$
                     \chi \colon \pi_1(M_1)\to \pi_1(M_2) \quad \textrm{and} \quad 
                     \psi \colon \pi_2(M_1)\to \pi_2(M_2)
                  $$
a pair of isomorphisms such that $\psi(gx)=\chi(g)\psi(x)$ for all $g\in\pi$, $x\in\pi_2(M_1)$ 
and preserving the intersection form i.e.,
                  $$
                     s_{M_2}(\psi(x), \psi(y)) = \chi_*(s_{M_1}(x, y)) \ .
                  $$

Let $B(M_i)$ denote the $2$-type of $M_i$ and $c_i\colon M_i \to B(M_i)$ corresponding $3$-equivalences for $i=1, 2$.  
We are going to construct a homotopy equivalence between $B(M_1)$ and $B(M_2)$.  Note that, we have isomorphisms 
$\xymatrix{\pi_2(c_i)\colon \pi_2(M_i)\ar[r]^(0.52){\cong}& \pi_2(B(M_i))}$ for $i = 1, 2$.  
Start with the composition
                 $$
                     \pi_2(c_2)\circ \psi \circ \pi_2(c_1)^{-1}\colon
                     \pi_2(B(M_1))\xrightarrow{\cong} \pi_2(B(M_2)) \ .
                 $$
We can think of any Abelian group $G$ as a topological group with discrete topology.
Then we can define $K(G, 1)=BG$, which is also an Abelian
topological group, and $K(G, 2)=BK(G, 1)=B^2G$.  This construction
is functorial.  Hence we have a homotopy equivalence
                $$
                    B^2(\pi_2(c_2)\circ \psi \circ \pi_2(c_1)^{-1})\colon
                    K(\pi_2(B(M_2)), 2) \to K(\pi_2(B(M_2)), 2)
                $$ 
which is $\pi_1$-equivariant, since $\psi$ is $\pi_1$-equivariant.  We also
have another $\pi_1$-equivariant homotopy equivalence, namely $E\chi
\colon E\pi_1(M_1) \to E\pi_1(M_2)$, where the contractible space
$E\pi_1(M_i)$ is the total space of the universal bundle over
$B\pi_1(M_i)$ for $i=1, 2$. Let 
               $$
                   \tau:= E(\chi)\times B^2(\pi_2(c_2)\circ \psi \circ \pi_2(c_1)^{-1})
               $$ 
and recall that $B(M_i)\simeq E\pi_1(M_i)\times_{\pi_1(M_i)} K(\pi_2(B(M_i)), 2)$.
Then we have 
               $$
                   \tau \colon B(M_1)\to B(M_2) \ .
               $$ 
Also since $B(M_i)$ is a fibration over $B\pi_1(M_i)$ with fiber
$K(\pi_2(B(M_i)), 2)$ by five lemma, we can see that $\tau$ is a
homotopy equivalence.  Summarizing we have a homotopy equivalence
$\tau$ with the following commutative diagram:
\vskip .1cm
               $$
                  \xymatrix{\pi_2(M_1)\ar[rr]^(0.45){\pi_2(c_1)}\ar[d]_{\psi}& & \pi_2(B(M_1))
                  \ar[d]^{\pi_2(\tau)}\cr \pi_2(M_2)\ar[rr]_(0.45){\pi_2(c_2)}& &
                  \pi_2(B(M_2)) \ .}
               $$
Note that we have $\tau_{\sharp}(s_{M_2})=s_{M_1}$.  Since $M_1$ and $M_2$ have isometric 
quadratic $2$-types, they have isomorphic intersection forms, which implies that 
$\tau_*((c_1)_*[M_1])=(c_2)_*[M_2]$  (we may need to use the image of $(c_2)_*[M_2]$ under a 
self-equivalence of $B(M_2)$ if necessary, see \cite[Lemma 3]{hillman} and the proof of \cite[Theorem 14]{hillman}).  
Also see the  discussion at the end of Section $2$ for the relation between the image of the fundamental class and 
the equivariant  intersection form.   Therefore $M_1$ and $M_2$ have isomorphic fundamental triples 
in the sense of \cite{baues-bleile} and hence 
they must be homotopy equivalent by  \cite[Corollary 3.2]{baues-bleile}. 

If we further assume that the assembly map
          \begin{itemize}
            \item[(A1)] $A_4 \colon H_4(K(\pi, 1); \mathbb{L}_0(\bZ)) \to L_4(\bZ \pi)$  \ is injective,
          \end{itemize}
then by \cite[Corollary 3.11]{j.f.davis} $M_1$ and $M_2$ are bordant over the normal $1$-type.


Therefore, if the fundamental group $\pi$ satisfies (A1),
then we have a cobordism $W$ between $M_1$ and $M_2$ over the normal $1$-type, 
which is a spin cobordism in the type (II) case.

Choose a handle decomposition of $W$. Since $W$ is connected, we can cancel all $0$- and $5$-handles. 
Further, we may assume by low-dimensional surgery that the inclusion map $M_1 \hookrightarrow W$ is a 
$2$ equivalence. So we can trade all $1$-handles for $3$-handles, and upside-down, all $4$-handles for 
$2$-handles. We end up with a handle decomposition of $W$ that only contains $2$- and $3$-handles, and view $W$ as
               $$
                   W = M_1\times [0, 1] \cup \{ 2-handles \} \cup 
                   \{ 3-handles \} \cup M_2\times [-1, 0] \ .
               $$
Let $W_{3/2}$ be the ascending cobordism that contains just $M_1$ and all $2$-handles and let  $M_{3/2}$ 
be its $4$-dimensional upper boundary.  The inclusion map $M_1 \hookrightarrow W$ is a $2$ equivalence, 
so attaching map $S^1\times D^3\to M_1$  of a $2$-handle must be null-homotopic.  Hence attaching a 
$2$-handle is the same as connect summing with $S^2\times S^2$ or the same as connect summing with 
$S^2\widetilde{\times} S^2$.  Since $M_1$ and $\widetilde{M_1}$ are spin at the same time, we can 
assume that there are no $S^2\widetilde{\times} S^2$-terms present in $M_{3/2}$ (see for example \cite[p. 80]{pamuk}).

From the lower half of $W$, we have $M_{3/2}\approx M_1\sharp m_1 (S^2\times S^2)$, while from the upper half, 
we have $M_{3/2} \approx M_2\sharp m_2(S^2\times S^2)$. Since $\rank(H_2(M_1))=\rank(H_2(M_2)))$, 
it follows that $m = m_1 = m_2$.  We have a homeomorphism 
                          $$
                               \xymatrix{\zeta \colon M_2 \sharp m(S^2 \times S^2)\ar[r]^{\approx}& 
                               M_1\sharp m(S^2 \times S^2)}.
                          $$

Next assume that:
\begin{itemize}
\item[(A2)] Whitehead group $Wh(\pi)$ is trivial for $\pi$.
\end{itemize}
Hence being $s$-cobordant is equivalent to being $h$-cobordant.  The strategy for the remainder of the proof is the following:
We will cut $W$ into two halves, then glue them back after sticking in an $h$-cobordism of $M_{3/2}$.  This cut and reglue procedure
will create a new cobordism from $M_1$ to $M_2$. If we choose the correct $h$-cobordism, then the $3$-handles from the upper half will
cancel the $2$-handles from the lower half. This means that the newly created cobordism between $M_1$ and $M_2$ will have no
homology relative to its boundaries, and so it will indeed be an $h$-cobordism from $M_1$ to $M_2$.

Note that  we have $\tau_{\sharp}(s_{M_2})=s_{M_1}$ and $s_{M_1}\cong s_{M_2}$ 
if and only if $s'_{M_1}\cong s'_{M_2}$.  Hence we can immediately deduce that
$\tau_{\sharp}s'_{M_1}=s'_{M_2}$.
Now let $M:=M_1\sharp m(S^2 \times S^2)$ and $M':= M_2 \sharp m(S^2 \times S^2)$ 
with the following quadratic $2$-types,
              $$
                  [\pi, \pi_2, s_M]:=[\pi_1(M_1), \pi_2(M_1)\oplus \La^{2m}, s_{M_1}\oplus H(\La^{m}) ]
              $$ 
and
              $$
                  [\pi_1(M_2), \pi_2(M_2)\oplus \La^{2m}, s_{M_2}\oplus H(\La^{m}) ]\ ,
              $$ 
where $H(\La^{m})$ is the hyperbolic form on $\La^{m} \oplus (\La^{m})^*$.  

Since $W$ is a cobordism over the normal $1$-type, 
             $$
                   (\pi_1(\zeta)\circ \chi, \pi_2(\zeta)\circ(\psi\oplus \id))=(\id,
                   \pi_2(\zeta)\circ(\psi\oplus \id))
             $$ 
is an element in $\Isom \quadtypeM$.  Let $B=B(M)$ denote the $2$-type of $M$. We have an
exact sequence of the form \cite{moller}
            \begin{equation}\label{eqn:moller}
                  \xymatrix{0\ar[r]&H^2(\pi; \pi_2)\ar[r]& \hept{B}\ar[r]^(0.45){(\pi_1, \pi_2)}& \Isom [\pi, \pi_2]\ar[r] &1}.
            \end{equation}
Therefore we can find a $\phi'' \in \hept{B}$ such that
            $$
                 \pi_1(\phi'')= \id \quad \textrm{and} \quad \pi_2(\phi'')=\pi_2(\zeta)\circ(\psi\oplus \id).
            $$
The homotopy self-equivalence $\phi''$ preserves the intersection form
$s_M$ but on the braid we see $\Isom^{\langle w_2\rangle} \quadtypecM$.  So to use
the braid, we need to construct a self homotopy equivalence of $B$ which preserves $c_*[M]$.

Hillman \cite{hillman} showed that for $\cd\pi \leq 2$, we have $\pi_2(M)\cong P\oplus H^2(\pi; \La)$ 
where $P$ is a projective $\La$-module.  He also showed that there exists a $2$-connected degree-$1$ 
map $g_{M}\colon M\to Z$ where $Z$ is a $PD_4$ complex with $\pi_2(Z)\cong H^2(\pi; \La)$ and 
$\ker(\pi_2(g_{M}))=P$.  He called $Z$ as the strongly minimal model for $M$. 

We may assume that $\pi_2(g_{M})$ is projection to the second factor and 
$c_Z \circ g_M = g \circ c$ for some $2$-connected map $g\colon B\to B(Z)$, 
where $B(Z)$ denotes the $2$-type of $Z$ . The map $g$ is a fibration with fibre 
$K(P, 2)$, and the inclusion of $H^2(\pi; \La)$ into $\pi_2(M_2)$ determines a section $s$ for $g$. 
Summarizing we have the diagram below with a commutative square

           \[
              \xymatrix{& M\ar[r]^{g_{M}} \ar[d]_{c}&Z \ar[d]^{c_Z}& 
              \cr K(P, 2) \ar[r]& B\ar[r]^(0.4)g& B(Z) \ar@/^1.5pc/@{->}[l]_{s}}
           \] 
Note that since $\phi''$ preserves the intersection form and identity on $\pi$, 
             $$
                 \pi_2(\phi'') \colon P\oplus H^2(\pi; \La) \to  P\oplus H^2(\pi; \La)
             $$  
has a matrix representation of the form
             $$
                 \pi_2(\phi'')=
                 \left[ {\begin{array}{cc}
                 \ast & \ast \\
                 0 & \id \\
                 \end{array} } \right]
              $$
where the first $\ast$ represents an $\pi$-module isomorphism 
$P \to P$ and the second $\ast$ represents an $\La$-module homomorphism 
$P \to H^2(\pi; \La)$.   We modify $\phi''$, first to $\phi'  \in \hept{B}$ 
so that $\pi_2(\phi')$ has a matrix representation of the form
  \[
         \pi_2(\phi')=
            \left[ {\begin{array}{cc}
             \ast & 0 \\
             0 & \id \\
                \end{array} } \right]
   \]              
i.e., it induces the zero homomorphism from $P$ to $H^2(\pi; \La)$.  To achieve this    
first define 
       $$
           \theta \colon P \to H^2(\pi; \La) \qquad \textrm{by} \qquad \theta(p)= \pr_2(\pi_2(\phi'')(p, 0)) \ .
       $$
Then define
       $$
           \alpha_{\theta} \colon P \oplus H^2(\pi; \La) \to P \oplus H^2(\pi; \La) \qquad 
           \textrm{by} \qquad \alpha_{\theta}(p, e)= (p, e-\theta(p)) \ . 
       $$
This newly defined map $\alpha_{\theta}$ is a $\La$-module isomorphism of $\pi_2$ 
by \cite[Lemma 3]{hillman}.  Now the pair $(\id, \alpha_{\theta})$ gives us an isomorphism 
$\phi''_{\theta}$ of $B$ by the sequence ~(\ref{eqn:moller}) on the previous page.  Define   
$\phi' := \phi''_{\theta} \circ \phi''$, and observe that $g\circ \phi' = g$.

Let $L:= L_{\pi}(P, 2)$ be the space with algebraic $2$-type $[\pi, P, 0]$ and universal 
covering space $\widetilde{L}\simeq K(P, 2)$.
We may construct $L$ by adjoining $3$-cells to $M$ to kill the kernel of the projection from 
$\pi_2$ to $P$ and then adjoining higher dimensional cells to kill the higher homotopy groups.  
The splitting $\pi_2\cong P\oplus H^2(\pi; \La)$ also determines a projection $q\colon B\to L$.

To begin with we have the following isomorphisms where $\Ga$ denotes the Whitehead quadratic 
functor \cite{whitehead-50}.

             \begin{align*}
                H_4(B)&\cong \Ga(\pi_2)\otimes_{\La}\bZ \oplus H_2(\pi; \pi_2)\\
                &\cong \Ga(H^2(\pi; \La) \oplus P)\otimes_{\La}\bZ \oplus H_2(\pi; H^2(\pi, \La))\\
                &\cong(\Ga(H^2(\pi, \La))\oplus \Ga(P) \oplus H^2(\pi, \La) \otimes P) \otimes_{\La}\bZ
                \oplus H_2(\pi; H^2(\pi, \La))\\  &\cong \Ga(P)\otimes_{\La}\bZ \oplus
                \Ga(H^2(\pi, \La))\otimes_{\La}\bZ\oplus H_2(\pi; H^2(\pi, \La)) \oplus(H^2(\pi, \La) \otimes P)
                \otimes_{\La}\bZ\\ &\cong H_4(L)\oplus H_4(B(Z))\oplus (H^2(\pi, \La) \otimes P)
                \otimes_{\La}\bZ \ .
             \end{align*}
\vskip.1cm

We are going to consider the difference $\phi'_*(c_*[M])- c_*[M] \in H_4(B)$.  
We start by projecting $\phi'_*(c_*[M])$ and $c_*[M]$ to
$H_4(L)\cong\Ga(P)\otimes_{\La}\bZ$. Recall that we have a nonsingular pairing
            $$
               s'_M\colon H^2(M; \La)/H^2(\pi; \Lambda)
               \times H^2(M; \La)/H^2(\pi; \Lambda)\to \La \ .
            $$ 
If we further restrict $s'_M$ to $\Hom_{\Lambda}(P, \Lambda)\cong H^2(L;
\La)/H^2(\pi; \Lambda)$, we get a Hermitian pairing $s''_M \in
\Her(P)$.  Therefore, we have the following commutative diagram
             $$
                \xymatrix{H_4(B)\ar[d]_{q_*} \ar[rr]^(0.4)F  & &
                \Her(H^2(B; \La))\ar[d]^{q_{\sharp}} \\
                \Ga(P)\otimes_{\La}\bZ \ar[rr]^{\cong}& & \Her(P) \ .}
             $$
The bottom row is an isomorphism  \cite[Theorem 2]{hillman}.  Both
$q_*(c_*[M])$ and $q_*(\phi'_*(c_*[M]))$ map to $s''_M$, hence
$q_*(c_*[M])=q_*(\phi'_*(c_*[M]))$.  Since $g\circ \phi' = g$, we have
             $$
                \phi'_*(c_*[M])-c_*[M] \in (H^2(\pi; \La)\otimes P)\otimes_{\La}\bZ \ .
             $$
As a final modification, as in \cite[Lemma 3]{hillman}, we can choose a self equivalence  
$\phi'_{\theta}$  of $B$ so that  $(\phi'_{\theta} \circ \phi')_*(c_*[M]) = c_*[M] $  mod 
$\Ga(H^2(\pi, \La))\otimes_{\La}\bZ$.   Hence  $(\phi'_{\theta} \circ \phi')_*(c_*[M]) = c_*[M] $ 
in $H_4(B)$, see also the proof of \cite[Theorem 14] {hillman}.  Let $\phi:=\phi'_{\theta} \circ \phi'$.



We have $\phi \in \Isom \quadtypecM$.  Recall that we have the following
short exact sequence by Lemma \ref{hat2}
             $$
                \xymatrix{0\ar[r]&H^1(M; \cy2)\ar[r]&
                \Isom^{\langle w_2\rangle} \quadtypecM \ar[r]^(0.55){\widehat{j}}&
                \Isom \quadtypecM \ar[r]&1 \ .}
             $$

Choose $\widehat{f}\in \Isom^{\langle w_2\rangle} \quadtypecM$ such
that $\widehat{j}(\widehat{f})=\phi$.  There exists $(W,
\widehat{F})\in \whtildeM $ which maps to $\widehat{f}$, i.e.,
$\widehat{F}\colon W\to \Bw$ and $F|_{\bd_2W}=\widehat{f}$.

Comparison of Wall's\cite{wall} surgery program with Kreck's modified surgery program gives a 
commutative diagram of exact sequences (see \cite{hk2}, Lemma $4.~ 1$)
          $$
             \xymatrix{&\tilde L_6(\bZ[\pi])\ar@{=}[r] \ar[d]&\tilde L_6(\bZ[\pi])\ar[d]&\cr & 
             \cS(M\times I,\bd)\ar[r]\ar[d]&\hM \ar[r]\ar[d]&\hept{M}\cr &
             \cT(M\times I, \bd)\ar[r]\ar[d]&\whtildeM \ar@{>>}[r] \ar[d]& 
             \Isom \quadtypecMw &\cr & L_5(\bZ[\pi]) \ar@{=}[r]&L_5(\bZ[\pi])&\cr}
          $$
The group $\hM$ consists of oriented $h$-cobordisms $W^5$ from $M$ to $M$, under the equivalence
relation induced by $h$-cobordism relative to the boundary.  The
tangential structures $\cT(M\times I, \bd)$, is the set of degree $1$ normal maps $F\colon (W,\bd W) \to
(M\times I, \bd)$, inducing the identity on the boundary.  The
group structure on $\cT(M\times I, \bd)$ is defined as for
$\whtildeM$.  The map $\cT(M\times I, \bd) \to \whtildeM$ takes
$F\colon (W,\bd W) \to (M\times I, \bd)$ to $(W, \widehat{F})\in
\whtildeM$, where $\widehat{F}= \widehat{p_1}\circ F$ (see \cite{wall} for further details).  Let
$\sigma_5\in L_5(\bZ[\pi])$ be the image of $(W, \widehat{F})$.
We further assume that
           \begin{itemize}
              \item[(A3)] The map $\cT(M\times I, \bd)\to L_5(\bZ[\pi])$ is onto.
           \end{itemize}
Let $(W', F')\in \cT(M\times I, \bd)$ map to $\sigma_5$ and let $(W',\widehat{F'})\in \whtildeM $ be the image of $(W', F')$.
Consider the difference of these elements in $\whtildeM$,
                    $$
                        (W'',\widehat{F''}):=(W',\widehat{F'}) \bullet (-W, \hat f^{-1}\bullet\widehat F) \in \whtildeM .
                    $$
Note that $\hat f^{-1}= \widehat{id}_M\colon M \to \Mw$ denotes the map defined by the pair 
$(id_M\colon M \to M, \nu_M\colon M \to BSO)$.
The element $(W'', \widehat{F''})\in \whtildeM $  maps to $0\in L_5(\bZ[\pi_1])$. 
By the exactness of the right-hand vertical sequence there exists an
$h$-cobordism $T$ of $M$ which maps to $(W'', \widehat{F''})$. Let $f$ denote the induced homotopy self equivalence of $M$.  By
construction we have $c\circ f \simeq \phi \circ c$ where $c\circ f= j\circ \widehat{f}$.  Note that $\pi_2(\zeta^{-1}\circ f)=\psi
\oplus \id$ \ and also $\zeta^{-1}\circ f$ \  gives us a self-equivalence of $M_{3/2}$.  Now, if we put the $s$-cobordism $T$
in between the two halves of $W$, then the $3$-handles from the upper half cancel the $2$-handles from the lower half.  
This finishes the proof of Theorem \ref{main}. \hskip 4.8cm $\square$

\bigskip

\noindent{\bf {Acknowledgements.}} This research was supported by the Slovenian-Turkish 
grant BI-TR/12-15-001 and 111T667.  The second author would like to thank Jonathan Hillman for 
very useful conversations.

\bibliographystyle{amsplain}
\providecommand{\bysame}{\leavevmode\hbox
to3em{\hrulefill}\thinspace}
\providecommand{\MR}{\relax\ifhmode\unskip\space\fi MR }
\providecommand{\MRhref}[2]{%
  \href{http://www.ams.org/mathscinet-getitem?mr=#1}{#2}
} \providecommand{\href}[2]{#2}

\end{document}